\documentclass[a4paper,12pt]{article}
\usepackage{comment}
\usepackage{cite}
\usepackage{amsmath}
\usepackage{amssymb}
\usepackage{amsfonts}
\usepackage[T1]{fontenc}
\usepackage[utf8]{inputenc}
\usepackage{graphicx}
\usepackage{fancyhdr}
\usepackage{float}
\usepackage{xcolor}
\usepackage{authblk}
\usepackage{mathrsfs}
\usepackage{empheq}
\usepackage[hyphens]{url}
\usepackage{hyperref}
\usepackage[]{breakurl}
\usepackage[]{amsthm}
\usepackage{tikz}
\usetikzlibrary{decorations.markings}

\usepackage{geometry}
\newtheorem{theorem}{Theorem}

\newcommand{\nobarfrac}{\genfrac{}{}{0pt}{}}

\newcommand{\stirdeux}[2]{\left\{\nobarfrac{#1}{#2}\right\}}

\newcommand{\Eulerian}[2]{%
\mathinner{%
\genfrac\langle\rangle{0pt}{}{#1}{#2}%
}%
}

\begin{document}

\title{Sum rules for permutations with fixed points involving Stirling numbers of the first kind}

\author[$\dagger$]{Jean-Christophe {\sc Pain}\footnote{jean-christophe.pain@cea.fr}\\
\small
$^1$CEA, DAM, DIF, F-91297 Arpajon, France\\
$^2$Universit\'e Paris-Saclay, CEA, Laboratoire Mati\`ere en Conditions Extr\^emes,\\
F-91680 Bruy\`eres-le-Ch\^atel, France
}

\date{}

\maketitle

\begin{abstract}
We propose sum rules for permutations $p_n(k)$ of the ensemble $\left\{1,2,\cdots,n\right\}$ with $k$ fixed points, in the form of partial sums of their moments. The corresponding identities involve Stirling numbers of the first kind $s(q,r)$. Using a formula due to Vassilev-Missana and the Schl\"omlich expression of Stirling numbers, we also deduce sum rules for binomial coefficients. Connections with Bell numbers $B_n$ are outlined.
\end{abstract}

\section{Introduction}\label{sec1}

Let us denote by $p_n(k)$ the number of permutations of the ensemble $\left\{1,2,\cdots,n\right\}$ which have exactly $k$ fixed points. A permutation has exactly one fixed point if and only if it is a derangement of $(n - 1)$ elements (fixing one element from $\left\{1,2\cdots,n\right\}$). There are $n$ choices for which element is fixed and $d_{n-1}$ derangements of the remaining elements. Hence there are $n\,d_{n-1}$ such permutations, where $d_k$ represents the number of derangements of $k$ elements. Performing a permutation with $k$ fixed points boils down to choosing $k$ fixed points among the $n$ initial ones, and then a derangement (permutation without fixed point) for the $(n-k)$ remaining ones. Thus
\begin{equation}\label{rel}
    p_n(k)=\binom{n}{k}\,d_{n-k},
\end{equation}
with $d_k=p_k(0)$. One has, following Ref. \cite{Tian2014}:
\begin{equation*}
    p_n(0)=\sum_{k=1}^n(k-1)\binom{n}{k}\,p_{n-k}(0).    
\end{equation*}
Such a relation is obtained from the Euler recurrence relation for derangements
\begin{equation*}
    p_n(0)=(n-1)\left[p_{n-1}(0)+p_{n-2}(0)\right]
\end{equation*}
and, as pointed out by the authors of Ref. \cite{Tian2014}, has first been proved by Deutsch and Elizalde \cite{Deutsch2010}, who gave a bijective proof and a proof involving generating functions. One has also, using Eq. (\ref{rel}):
\begin{equation}\label{four}
    p_{n-k}(0)=\sum_{l=1}^{n-k}(l-1)\binom{n-k}{l}\,p_{n-k-l}(0).    
\end{equation}
Thus, using Eq. (\ref{rel}), one finds
\begin{equation*}
    p_n(k)=\binom{n}{k}\,p_{n-k}(0),
\end{equation*}
yielding
\begin{equation*}
    p_{n-k-l}(0)=\frac{p_{n-k}(l)}{\displaystyle\binom{n-k}{l}}
\end{equation*}
which gives, according to Eq. (\ref{four}):
\begin{equation*}
    \frac{p_n(k)}{\displaystyle\binom{n}{k}}=\sum_{l=1}^{n-k}(l-1)\binom{n-k}{l}\,\frac{p_{n-k}(l)}{\displaystyle\binom{n-k}{l}},    
\end{equation*}
and thus
\begin{equation*}
    p_n(k)=\binom{n}{k}\sum_{l=1}^{n-k}(l-1)\,p_{n-k}(l).    
\end{equation*}
Only a limited number of identities are known for permutations that contain fixed points. Most of these identities are expressed in terms of Stirling numbers of the second kind or Bell numbers. The Stirling numbers of the second kind, denoted by $\stirdeux{q}{k}$, count the number of distinct ways to decompose a set of $n$ elements into $k$ non-empty subsets \cite{Pitman1997}. The Bell number $B_q$ is the number of partitions of a set of cardinality $q$ \cite{Bell1934}. It is also the number of $n-$pattern sequences \cite{Cooper1992}, and is related to Stirling numbers of the second kind through 
\begin{equation*}
    B_q=\sum_{k=0}^q\stirdeux{q}{k}.
\end{equation*}
Actually for every natural number $n\geq q$ one can write \cite{Wilf1994}:
\begin{equation}\label{dobi}
    B_q=\sum_{k=0}^n\stirdeux{q}{k}.
\end{equation}
Inserting the usual expression of Stirling number of the second kind
\begin{equation*}
    \stirdeux{q}{k}=\frac{1}{k!}\sum_{j=0}^k(-1)^{k-j}\binom{k}{j}j^q
\end{equation*}
into Eq. (\ref{dobi}) yields
\begin{align*}
    B_q&=\sum_{k=0}^n\frac{1}{k!}\sum_{j=0}^k(-1)^{k-j}\binom{k}{j}j^q\nonumber\\
    &=\sum_{k=0}^n\sum_{j=0}^k(-1)^{k-j}\frac{j^q}{j!(k-j)!}\nonumber\\
    &=\sum_{j=0}^n\frac{j^q}{j!}\sum_{k=j}^n\frac{(-1)^{k-j}}{(k-j)!}\nonumber\\
    &=\sum_{j=0}^n\frac{j^q}{j!}\sum_{s=0}^{n-j}\frac{(-1)^s}{s!}
\end{align*}
and since
\begin{equation*}
    p_n(j)=\frac{n!}{j!}\sum_{s=0}^{n-j}\frac{(-1)^s}{s!},
\end{equation*}
one finds
\begin{equation}\label{newdobi}    
   B_q=\frac{1}{n!}\sum_{j=0}^{n}j^q\,p_n(j).
\end{equation}
The foregoing formula is intimately tied to the fact that the limit law for the number of fixed points is a Poisson law \cite{Fulman2024}. Also it is quite easy to see a bijective interpretation of it, since it amounts to count permutations of size $n$ with a marked ordered $q-$tuple of fixed points (allowing a fixed point being repeated). For example, for $q=2$ one sees well the two cases, upon counting the number of possibilities by choosing first the values of the fixed points, and then the remaining permutation (when the two marked fixed points are different the associated partition of 2 is $\{1\},\{2\}$, when they are equal the associated partition of 2 is $\{1,2\}$).

It turns out that Eq. (\ref{newdobi}) can be interpreted as a variation of Dobi\'nski's formula \cite{Dobinski1877}: 
\begin{equation*}
    B_q=\frac{1}{e}\sum_{k=0}^q\frac{k^q}{k!}.
\end{equation*}

Note that since the number of derangements of $k$ elements reads
\begin{equation}\label{dera}
    p_k(0)=d_k=k!\sum_{i=0}^k\frac{(-1)^i}{i!}, 
\end{equation}
equation (\ref{dobi}) can be recast into
\begin{equation*}
    B_q=\sum_{k=1}^n\frac{k^qd_{n-k}}{k!(n-k)!}.
\end{equation*}
It is worth mentioning that using known upper bounds for Bell numbers, it it possible to directly derive bounds for finite sums of permutations with fixed points, or directly for $p_n(k)$, by inverting the relation (\ref{newdobi}) (see \ref{appA}).

In section \ref{sec2}, we derive a family of sum rules (or identities) involving Stirling numbers of the first kind \cite{Comtet1972}. Resulting sum rules for binomial coefficients are obtained in section \ref{sec3}.

\section{Sum rules involving Stirling numbers of the first kind}\label{sec2}

\subsection{The generating-function technique}\label{subsec21}

Let us consider
\begin{equation*}
    f_n(t)=\sum_{k=0}^nt^k\,p_n(k)
\end{equation*}
and calculate the generating function of $f_n(t)/n!$:
\begin{equation*}
    G(x)=\sum_{n=0}^{\infty}\frac{1}{n!}\left(\sum_{k=0}^nt^kp_n(k)\right)x^n.
\end{equation*}
Thanks to Eq. (\ref{rel}), one gets
\begin{align}\label{ert}
    G(x)&=\sum_{n=0}^{\infty}\frac{1}{n!}\left(\sum_{k=0}^nt^k\binom{n}{k}p_{n-k}(0)\right)x^n\nonumber\\
    &=\sum_{k,r=0}^{\infty}p_k(0)\frac{x^k}{k!}\frac{(xt)^r}{r!}\nonumber\\
    &=e^{xt}\sum_{k=0}^{\infty}\frac{p_k(0)}{k!}x^k.
\end{align}
Using the number of derangements of $k$ elements given by Eq. (\ref{dera}), Eq. (\ref{ert}) becomes
\begin{align*}
    G(x)&=e^{xt}\sum_{k=0}^{\infty}\left(\sum_{i=0}^k\frac{(-1)^i}{i!}\right)x^k\nonumber\\
    &=e^{xt}\sum_{i=0}^{\infty}\frac{(-1)^i}{i!}\sum_{k=i}^{\infty}x^k=e^{xt}\sum_{i=0}^{\infty}\frac{(-1)^i}{i!}\frac{x^i}{1-x}
\end{align*}
and thus finally
\begin{equation*}
    G(x)=\frac{e^{x(t-1)}}{1-x},
\end{equation*}
which is nothing else than the generating function of the derangements $e^{-x}/(1-x)$, multiplied by $e^{xt}$.

\subsection{Extracting the coefficients}

\begin{theorem}

For $n$ a non-zero natural number, and $m$ a natural number, one has
\begin{equation}\label{mainres}
    \sum_{k=0}^n\sum_{i=0}^{r+1}s(r+1,i)\,k^i\,p_n(k)=n!,
\end{equation}
where $p_n(k)$ is the permutation of $\left\{1,2\cdots,n\right\}$ with $k$ fixed points and $s(q,r)$ represents the (signed) Stirling number of the first kind.
\end{theorem}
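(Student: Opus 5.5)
The proof rests on one observation: the inner sum over $i$ is a falling factorial. By the defining property of the signed Stirling numbers of the first kind,
\begin{equation*}
    \sum_{i=0}^{r+1}s(r+1,i)\,k^i=k(k-1)\cdots(k-r)=:(k)_{r+1}.
\end{equation*}
So (\ref{mainres}) says that the $(r+1)$-th factorial moment of the number of fixed points, multiplied by $n!$, equals $n!$. Factorial moments are exactly what derivatives of the polynomial $f_n(t)=\sum_{k=0}^n t^k p_n(k)$ at $t=1$ produce:
\begin{equation*}
    \sum_{k=0}^n (k)_{r+1}\,p_n(k)=\left.\frac{d^{r+1}}{dt^{r+1}}f_n(t)\right|_{t=1}.
\end{equation*}
The plan is therefore to compute this derivative from the generating function $G(x)=e^{x(t-1)}/(1-x)$ obtained in subsection \ref{subsec21}.

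Since $G(x)=\sum_n f_n(t)\,x^n/n!$, we have $f_n(t)=n!\,[x^n]\,G(x)$. Differentiating in $t$ termwise is legitimate, because each coefficient is a polynomial in $t$ and we are working with formal power series in $x$. This gives
\begin{equation*}
    \frac{\partial^{r+1}}{\partial t^{r+1}}G(x)=\frac{x^{r+1}e^{x(t-1)}}{1-x},
\end{equation*}
and setting $t=1$ leaves
\begin{equation*}
    \frac{x^{r+1}}{1-x}=\sum_{n\ge r+1}x^n .
\end{equation*}
Extracting the coefficient of $x^n$ and multiplying by $n!$ yields
\begin{equation*}
    \sum_{k=0}^n (k)_{r+1}\,p_n(k)=n!\qquad\text{for } n\ge r+1.
\end{equation*}
Combined with the falling-factorial identity for the inner sum, this is (\ref{mainres}).

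The only real obstacle is the range of validity. For $r+1>n$, every $k\le n$ satisfies $k\le r$, so $(k)_{r+1}=0$ and the left-hand side vanishes. The statement must therefore be read with the implicit condition $0\le r\le n-1$; the variable $m$ in the statement appears to be a typo for $r$. I would state this restriction explicitly in the proof.

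As a sanity check, take $r=0$. The identity becomes $\sum_k k\,p_n(k)=n!$, which says a random permutation has on average one fixed point. This also matches (\ref{newdobi}) with $q=1$, since $B_1=1$.
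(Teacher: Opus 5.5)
Your proof is correct and follows the paper's route: both use the generating function $G(x)=e^{x(t-1)}/(1-x)$, take the $(r+1)$-th $t$-derivative at $t=1$ to get $\sum_k (k)_{r+1}\,p_n(k)=n!$, and expand $(k)_{r+1}$ via the $s(r+1,i)$. The only difference is that you differentiate before extracting $[x^n]$, whereas the paper first writes $f_n(t)=n!\sum_{i=0}^n (t-1)^i/i!$. Your write-up also corrects the paper's slips: the paper says ``$(r-1)$th derivative at $t=0$'' and restricts to $1\le r<n-1$, but the correct range is your $0\le r\le n-1$, since the left-hand side vanishes for $r\ge n$.
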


\begin{proof}

Expanding $G(x)$ and identifying the coefficients of $x^n$, one finds
\begin{equation*}
    \sum_{k=0}^nt^kp_n(k)=n!\sum_{i=0}^{n}\frac{(t-1)^i}{i!}.
\end{equation*}
Evaluating the $(r-1)^{th}$ derivative at $t=0$, with $1\leq r<n-1$ yields
\begin{equation*}
    \sum_{k=0}^nk(k-1)(k-2)\cdots(k-r)p_n(k)=n!.
\end{equation*}
The descending product involved in the latter expression can be expressed in terms of monomials using
\begin{equation*}
    (x)_n=x(x-1)(x-2)\cdots(x-q+1)=\sum_{k=0}^qs(q,k)x^k,
\end{equation*}
where $s(q,k)$ is a signed Stirling number of the first kind. The sign of $s(q,k)$ is the same as the sign of $(-1)^{q-k}$ and one has in particular
\begin{equation*}
    \sum_{k=0}^q(-1)^ks(q,k)=(-1)^qq!
\end{equation*}
and $\forall k>q$, $s(q,k)=0$. The Stirling numbers of the first kind $s(q,k)$ satisfy, for $1\leq k\leq q$, the recurrence relation
\begin{equation*}
    s(q+1,k)=s(q,k-1)-q~s(q,k),
\end{equation*}
with $s(0,0)=1$ and $\forall q\geq 1$: $s(q,0)=s(0,q)=0$. Connections of Stirling numbers with combinatorics are briefly sketched in \ref{appB}. In the present case, we get
\begin{equation*}
    x(x-1)(x-2)\cdots(x-r)=\sum_{k=0}^{r+1}s(r+1,k)x^k
\end{equation*}
yielding
\begin{equation*}
    \sum_{k=0}^n\sum_{i=0}^{r+1}s(r+1,i)\,k^i\,p_n(k)=n!,
\end{equation*}
which completes the proof.

\end{proof}

The case $q=1$ was the subject of an olympiad problem \cite{IMO1987}. It can be easily obtained from Eq. (\ref{rel}) by induction. For $n=1$, one has
\begin{equation*}
    \sum_{k=0}^1k\,p_n(k)=p_1(1)=\binom{1}{1}d_0=1=1!,
\end{equation*}
and
\begin{equation*}
    \sum_{k=0}^2k\,p_n(k)=p_2(1)+2p_2(2)=\binom{2}{1}d_1+2\binom{2}{2}d_0=2=2!,
\end{equation*}
since $d_0=1$ and $d_1=0$. Let us assume that
\begin{equation*}
    \sum_{k=0}^{n-1}k\,p_{n-1}(k)=(n-1)!.
\end{equation*}
At the next step, one has
\begin{align*}
    \sum_{k=0}^nk\,p_n(k)&=\sum_{k=0}^nk\binom{n}{k}p_{n-k}(0)\nonumber\\
    &=n\sum_{k=0}^n\binom{n-1}{k-1}p_{n-1-(k-1)}(0)\nonumber\\
    &=n\times(n-1)!=n!.
\end{align*}
The next values are
\begin{equation*}
    \sum_{k=0}^nk^2p_n(k)=2\,n!, \qquad\sum_{k=0}^nk^3p_n(k)=5\,n!,
\end{equation*}
and
\begin{equation*}
    \sum_{k=0}^nk^4p_n(k)=15\,n!, \qquad\sum_{k=0}^nk^5p_n(k)=52\,n!.
\end{equation*}

\section{Binomial sum rules}\label{sec3}

By employing identity (\ref{mainres}) alongside a formula by Vassilev-Missana and the Schlömilch expression for Stirling numbers, this section establishes several sum rules for binomial coefficients.

In 2005, Vassilev-Missana obtained the following identity \cite{Howard1974,Vassilev2005} (see \ref{appC}):
\begin{equation}\label{vassi}
    k^i=\sum_{l=0}^{\lfloor\frac{(k-1)i}{k}\rfloor}(-1)^l\binom{i}{l}\binom{k(i-l)}{i},
\end{equation}
where $\lfloor x\rfloor$ represents the integer part of $x$. Equation (\ref{vassi}), combined with Eq. (\ref{mainres}) yields
\begin{equation}\label{mainres2}
    \sum_{k=0}^n\sum_{i=0}^{r+1}\sum_{l=0}^{\lfloor\frac{(k-1)i}{k}\rfloor}s(r+1,i)(-1)^l\binom{i}{l}\binom{k(i-l)}{i}\,p_n(k)=n!.
\end{equation}
Inserting the Schl\"omlich expression \cite{Schlomlich1852,Comtet1974}:
%
%
\begin{equation*}
    s(r+1,i)=\sum_{0\leq j\leq h\leq r+1-i}(-1)^{j+h}\binom{h}{j}\binom{r+h}{r+1-i+h}\binom{2r+2-i}{r+1-i-h}\frac{(h-j)^{r+1-i+h}}{h!}
\end{equation*}
into Eq. (\ref{mainres2}), yields
\begin{align*}
    \sum_{k=0}^n\sum_{i=0}^{r+1}\sum_{l=0}^{\lfloor\frac{(k-1)i}{k}\rfloor}\sum_{0\leq j\leq h\leq r+1-i}&(-1)^{j+h}\binom{h}{j}\binom{r+h}{r+1-i+h}\binom{2r+2-i}{r+1-i-h}\frac{(h-j)^{r+1-i+h}}{h!}\\
    &\times (-1)^l\binom{i}{l}\binom{k(i-l)}{i}\,p_n(k)=n!.
\end{align*}
We can in turn use Eq. (\ref{vassi}) to replace $(h-j)^{r+1-i+h}$ by the binomial sum (\ref{vassi}) in the latter expression: 
\begin{equation*}
    (h-j)^{r+1-i+h}=\sum_{t=0}^{\lfloor\frac{(h-j-1)(r+1+h-i)}{h-j}\rfloor}(-1)^t\binom{r+1+h-i}{t}\binom{(h-j)(r+1+h-i-t)}{r+1-i+h}
\end{equation*}
yielding
\begin{align*}
    \sum_{k=0}^n\sum_{i=0}^{r+1}\sum_{l=0}^{\lfloor\frac{(k-1)i}{k}\rfloor}&\sum_{0\leq j\leq h\leq r+1-i}\sum_{t=0}^{\lfloor\frac{(h-j-1)(r+1+h-i)}{h-j}\rfloor}(-1)^{t+j+h+l}\binom{r+1+h-i}{t}\nonumber\\
    &\binom{(h-j)(r+1+h-i-t)}{r+1-i+h}\binom{h}{j}\binom{r+h}{r+1-i+h}\binom{2r+2-i}{r+1-i-h}\nonumber\\
    &\times\binom{i}{l}\binom{k(i-l)}{i}\,p_n(k)=n!,
\end{align*}
and in a second step, since
\begin{equation}\label{expresspnk}
    p_n(k)=\frac{n!}{k!}\sum_{m=0}^{n-k}\frac{(-1)^m}{m!},
\end{equation}
one gets
\begin{align*}
    \sum_{k=0}^n\sum_{m=0}^{n-k}\sum_{i=0}^{r+1}\sum_{l=0}^{\lfloor\frac{(k-1)i}{k}\rfloor}&\sum_{0\leq j\leq h\leq r+1-i}\sum_{t=0}^{\lfloor\frac{(h-j-1)(r+1+h-i)}{h-j}\rfloor}(-1)^{t+j+h+l}\binom{r+1+h-i}{t}\nonumber\\
    &\binom{(h-j)(r+1+h-i-t)}{r+1-i+k}\binom{h}{j}\binom{r+h}{r+1-i+h}\binom{2r+2-i}{r+1-i-h}\nonumber\\
    &\times\binom{i}{l}\binom{k(i-l)}{i}\,\frac{1}{k!}\frac{(-1)^m}{m!}=1.
\end{align*}
Note that using the following representation of Bell numbers:
\begin{equation*}
    B_q=\sum_{k=1}^q\sum_{i=1}^k\frac{(-1)^{k-i}\,i^q}{k!}
\end{equation*}
combined with Eq. (\ref{vassi}) again, one gets
\begin{equation*}
    B_q=\sum_{k=1}^q\sum_{i=1}^k\sum_{l=0}^{\lfloor\frac{(i-1)q}{i}\rfloor}\frac{(-1)^{l+k-i}}{k!}\binom{q}{l}\binom{i(q-l)}{q}  
\end{equation*}
which, for $n\geq q$, is equal to (see Eq. (\ref{newdobi})):
\begin{equation*}
    \frac{1}{n!}\sum_{k=0}^nk^q\,p_n(k).
\end{equation*}
Replacing $k^q$ and $p_n(k)$ by their expressions given in Eqs. (\ref{vassi}) and (\ref{expresspnk}) respectively, leads to
\begin{equation*}
    B_q=\sum_{k=0}^n\sum_{l=0}^{\lfloor\frac{(k-1)q}{k}\rfloor}\sum_{i=0}^{n-k}\frac{(-1)^{l+i}}{k!i!}\binom{q}{l}\binom{k(q-l)}{q}.
\end{equation*}
Similar identities can be obtained, for instance using the Worpitzky identity \cite{Worpitzky1883,Comtet1974}:
\begin{equation*}
    k^i=\sum_{j=0}^i\Eulerian{i}{j}\binom{k+j}{j},
\end{equation*}
where $\Eulerian{i}{j}$ is the Eulerian number, i.e., the number of permutations of $\{1, 2, \cdots, n\}$ having $k$ permutation ascents \cite{Graham1994}.

\section{Conclusion}

We proposed sum rules for permutations with $k$ fixed points, involving Stirling numbers of the first kind. The corresponding identities are partial sums of the moments of these partitions. As an additional benefit, combining a representation of $k^i$ with an expression of Stirling numbers due to Schl\"omlich, a method to derive binomial sums was explained. Several unusual expressions of Bell numbers were also given. In the future, we plan to investigate the specific case of involutions of $\left\{1,2,\cdots,n\right\}$ with $k$ fixed points.

\appendix

\section{Bounds and asymptotic forms}\label{appA}

Adell \cite{Adell2022} obtained the following upper bound
\begin{equation*}
    |s(n+1,m+1)|\leq\frac{n!(\log n)^m}{m!}\left(1+\frac{m}{\log n}\right)
\end{equation*}
and thus, we get
\begin{equation*}
    n!\leq\sum_{k=0}^n\sum_{i=0}^{r+1}|s(r+1,i)|\,k^i\,p_n(k)\leq\sum_{k=0}^n\lambda_{r,k}\,p_n(k)
\end{equation*}
with
\begin{equation*}
    \lambda_{r,k}=\sum_{i=0}^{r+1}\frac{r!(\log r)^{i-1}}{(i-1)!}\left(1+\frac{(i-1)}{\log r}\right)\,k^i
\end{equation*}
which turns out to be equal to
\begin{equation*}
    \lambda_{r,k}=k^2(k\,\log r)^r\left[-1+(1+k)r^k\log r\,E_{-r}(k\log r)\right],
\end{equation*}
where
\begin{equation*}
    E_n(z)=\int_1^{\infty}\frac{e^{-zt}}{t^n}\,\mathrm{d}t
\end{equation*}
is an exponential integral. We have also \cite{deBruijn1981,Lovasz1993,Grunwald2025}:
\begin{equation*}
    B_n\asymp \frac{1}{\sqrt{n}}\left(\frac{n}{W(n)}\right)^{n+1/2}\,\exp\left(\frac{n}{W(n)}-n-1\right)
\end{equation*}
as well as \cite{Odlyzko1995}:
\begin{equation*}
    B_n\asymp \frac{n!}{\sqrt{2\pi W^2(n)\,e^{W(n)}}}\frac{e^{e^{W(n)}-1}}{W^n(n)}
\end{equation*}
where $W$ is the usual Lambert function. This yields $\forall n>0$ \cite{Berend2010}:
\begin{equation*}
    B_n<\left(\frac{0.792\,n}{\log(n+1)}\right)^n.
\end{equation*}

\section{Properties of Stirling numbers of the first kind}\label{appB}

The coefficients arising in the polynomial expansion of
\begin{equation*}
    x(x+1)\cdots(x+(n-1)).
\end{equation*}
are precisely the unsigned Stirling numbers of the first kind One readily sees that these quantities are intimately connected to sums of powers of integers:
\begin{align*}
    x(x+1)\cdots(x+(n-1))=&x^n+x^{n-1}\left(\sum_{k=1}^n(k-1)\right)+x^{n-2}\left(\sum_{i<j=1}^n(i-1)(j-1)\right)+\cdots\nonumber\\
    &+x\left(\prod_{k=1}^n(k-1)\right),
\end{align*}
and it is convenient to write
\begin{equation*}
    x(x+1)\cdots(x+(n-1))=(x+\underbrace{1+\cdots+1}_{n-1 \text{ times}})\cdots(x+1+1)(x+1)x.
\end{equation*}
Expanding this product via the distributive law yields exactly $n!$ terms. Indeed, in the first factor $(x+1+\cdots+1)$ there are $n$ possible choices, in the second factor there are $n-1$, and this pattern continues until the final factor $x$, from which there is only a single choice. More generally, at the $k$th factor there are $n-k+1$ available terms, and precisely one of these contributes an additional power of $x$ to the resulting monomial.

We now describe a parallel construction for generating a permutation $\sigma$. Begin by selecting $\sigma(1)$ from the $n$ possible values. Next, choose $\sigma(\sigma(1))$, then $\sigma(\sigma(\sigma(1)))$, and continue in this manner until a cycle is completed. Once this cycle is closed, select $\sigma(s)$, where $s$ is the smallest element not yet assigned, and repeat the procedure. At the $k$th step of this construction, there are again $n-k+1$ possible choices, exactly one of which results in the formation of a new cycle.

These two procedures are therefore equivalent. Consequently, the number of permutations of $n$ elements with exactly $k$ cycles is given by the coefficient of the monomial $x^k$ in the expansion of $(x+1+\cdots+1)(x+1)x$.

\section{Proof of the binomial expression of $k^i$}\label{appC}

\begin{align*}
    \left[(1+x)^k-x^k\right]^i=&\sum_{l=0}^i(-1)^l\binom{i}{l}x^{kl}(1+x)^{k(i-l)}\\    
    =&\sum_{l=0}^i(-1)^l\binom{i}{l}x^{kl}\sum_{j=0}^{k(i-l)}\binom{k(i-l)}{j}x^{j}\\
    =&\sum_{l=0}^i(-1)^l\binom{i}{l}\sum_{j=0}^{k(i-l)}\binom{k(i-l)}{j}x^{j+kl}.
\end{align*}
The coefficient of $x^{(k-1)i}$ is obtained for $j+kl=(k-1)i$ with $j\geq 0$, i.e., $(k-1)i-kl\geq 0$ or $l\leq\lfloor(k-1)i/k\rfloor$. It therefore follows that it is equal to
\begin{equation*}
    \sum_{l=0}^{\lfloor(k-1)i/k\rfloor}(-1)^l\binom{i}{l}\binom{k(i-l)}{i}.
\end{equation*}
One has also
\begin{equation*}
    \left[(1+x)^k-x^k\right]^i=\left[1+\binom{k}{1}x+\binom{k}{2}x^2+\cdots+\binom{k}{k-1}x^{k-1}\right]^i
\end{equation*}
in which the coefficient of $x^{(k-1)i}$ is
\begin{equation}
    \binom{k}{k-1}^i=k^i,
\end{equation}
and thus
\begin{equation*}
    k^i=\sum_{l=0}^{\lfloor(k-1)i/k\rfloor}(-1)^l\binom{i}{l}\binom{k(i-l)}{i}.
\end{equation*}

\section*{Acknowledgement} 
I would like to thank Eric Fusy for helful comments.


\end{document}